\newtheorem{lemma}{Lemma}
\newtheorem{theorem}{Theorem}
\newtheorem{corollary}{Corollary}
\newtheorem{claim}{Claim}
\newtheorem{remark}{Remark}
\newtheorem{proposition}{Proposition}
\newcommand{\dss}{\displaystyle\sum}
\newcommand{\lp}{\left (}
\newcommand{\rp}{\right )}
\newcommand{\cF}{\mathcal{F}}
\newcommand{\cG}{\mathcal{G}}
\newcommand{\CR}{{\rm cr}}
\newcommand{\commentout}[1]{}
\title{Anti-Ramsey number of edge-disjoint rainbow spanning trees in all graphs}
\author{
Linyuan Lu
\thanks{University of South Carolina, Columbia, SC 29208,
({\tt lu@math.sc.edu}).}
\and
Andrew Meier \thanks{University of South Carolina, Columbia, SC 29208,
({\tt am66@mailbox.sc.edu}).} 
\and
Zhiyu Wang \thanks{Georgia Institute of Technology, Atlanta, GA 30332,
({\tt zwang672@gatech.edu}).}
}
\begin{document}

\maketitle
\begin{abstract}
An edge-colored graph $G$ is called \textit{rainbow} if every edge of $G$ receives a different color. Given any host graph $G$,
the \textit{anti-Ramsey} number of $t$ edge-disjoint rainbow spanning trees in $G$, denoted by $r(G,t)$, is defined as the maximum number of colors in an edge-coloring of $G$ containing no $t$ edge-disjoint rainbow spanning trees.
For any vertex partition $P$, let $E(P,G)$ be the set of non-crossing edges in $G$ with respect to $P$.
In this paper, we determine $r(G,t)$ for all host graphs $G$: $r(G,t)=|E(G)|$ if there exists a partition $P_0$ with $|E(G)|-|E(P_0,G)|<t(|P_0|-1)$; and 
$r(G,t)=\max_{P\colon |P|\geq 3} \{|E(P,G)|+t(|P|-2)\}$ otherwise. As a corollary, we determine $r(K_{p,q},t)$ for all values of $p,q, t$, improving a result of Jia, Lu and Zhang.


\end{abstract}

\section{Introduction}
Given a (multi-)graph $G$, let $V(G)$ and $E(G)$ denote the vertex set and edge set of $G$ respectively. \commentout{Let $\|G\|$ denote the number of edges in $G$.}An edge-colored graph $G$ is called \textit{rainbow} if every edge of $G$ receives a different color.
The general \textit{anti-Ramsey problem} asks for the maximum number of colors $AR(K_n,\cG)$ in an edge-coloring of $K_n$ containing no rainbow copy of any graph in a class $\cG$. For some earlier results when $\cG$ consists of a single graph, see the survey \cite{FMO10}. In particular, Montellano-Baallesteros and Neumann-Lara \cite{Montella-Nemann05} showed a conjecture of Erd\H{o}s, Simonovits and S\'os \cite{ESS75} by computing $AR(K_n, C_k)$. Jiang and West \cite{Jiang-West04} determined the anti-Ramsey number of the family of trees with $m$ edges. For some more recent results, see for example \cite{FGLX2021, Gorgol2016, GLS2020, Jiang-Pikhurko2009, LSS2019, Xie-Yuan2020, Yuan2021+, Yuan-Zhang2021+}.

Anti-Ramsey problems have also been investigated for rainbow spanning subgraphs. In particular, Hass and Young \cite{Haas-Young12} showed that the anti-Ramsey number for perfect matchings (when $n$ is even) is $\binom{n-3}{2}+2$ for $n\geq 14$. For rainbow spanning trees, let $r(K_n,t)$ be the maximum number of colors in an edge-coloring of $K_n$ not having $t$ edge-disjoint rainbow spanning trees. Bialostocki and Voxman \cite{Bialostocki-Voxman01} showed that $r(K_n,1) = \binom{n-2}{2}+1$ for $n \geq 4$. Akbari and Alipour \cite{Akbari-Alipour07} showed that $r(K_n,2) = \binom{n-2}{2}+2$ for $n\geq 6$. Jahanbekam and West \cite{Jahanbekam-West16} extended the investigations to arbitrary number of edge-disjoint rainbow spanning trees (along with the anti-Ramsey number of some other edge-disjoint rainbow spanning structures such as matchings and cycles). In particular, for $t$ edge-disjoint rainbow spanning trees, they showed that
	 $$r(K_n,t) = \begin{cases} 
	                     \binom{n-2}{2} +t   & \textrm{ for } n >  2t+\sqrt{6t-\frac{23}{4}}+\frac{5}{2},\\
     			\binom{n}{2}-t & \textrm{ for } n = 2t,
 		      \end{cases}$$
and they \cite{Jahanbekam-West16} conjectured that $r(K_n,t) = \binom{n-2}{2} + t$ whenever $n\geq 2t+2 \geq 6$. This conjecture was recently settled by the first and the third author in \cite{Lu-Wang-AntiRamsey}.
Together with previous results \cite{Bialostocki-Voxman01, Akbari-Alipour07, Jahanbekam-West16}, they give the anti-Ramsey number of $t$ edge-disjoint rainbow spanning trees in $K_n$ for all values of $n$ and $t$. 
\begin{theorem}[Lu-Wang \cite{Lu-Wang-AntiRamsey}]\label{anti-Ramsey}
	For all positive integers $n$ and $t$,
 $$r(K_n,t) = \begin{cases} 
 {n-2 \choose 2}+t   & \textrm{ for } n \geq 2t+2,\\
{n-1\choose 2}   & \textrm{ for } n = 2t+1,\\
{n\choose 2}-t & \textrm{ for } n = 2t,\\
{n\choose 2} &   \textrm{ for } n<2t.
 \end{cases}$$
\end{theorem}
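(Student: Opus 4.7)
The plan is to match lower and upper bounds on $r(K_n,t)$ in each of the four regimes. The lower bounds come from explicit vertex-partition constructions, prefiguring the paper's main theorem. Given a partition $P=\{V_1,\dots,V_k\}$ of $V(K_n)$ with $k\geq 3$, I would color each non-crossing edge with a distinct color—using $|E(P,K_n)|=\sum_i\binom{|V_i|}{2}$ colors—and distribute $t(k-2)$ additional colors on the crossing edges so that any $t$-packing of edge-disjoint spanning trees is forced to repeat a color within some tree. The optimizing partitions are $P_0=\{\{u\},\{v\},V\setminus\{u,v\}\}$ for $n\geq 2t+2$, giving $\binom{n-2}{2}+t$ colors, and the all-singleton partition for $n\in\{2t+1,2t\}$, giving $t(n-2)=\binom{n-1}{2}$ and $\binom{n}{2}-t$ respectively; the case $n<2t$ is trivial since $\binom{n}{2}<t(n-1)$ leaves too few edges for $t$ edge-disjoint spanning trees at all.

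For the upper bound, I would argue by contradiction: given a coloring of $K_n$ with more than the claimed number of colors, produce $t$ edge-disjoint rainbow spanning trees. The basic approach is to pick one representative edge per color, forming a rainbow subgraph $H$, and then show that $H$ admits $t$ edge-disjoint spanning trees via Nash-Williams/Tutte. Since Nash-Williams/Tutte requires, for every partition $P$ of $V(K_n)$, that $H$ have at least $t(|P|-1)$ crossing edges with respect to $P$, one must choose $H$ cleverly to satisfy this condition for all $P$ simultaneously. The analysis in the extremal case reduces to establishing
\[
  k\leq\max_{|P|\geq 3}\bigl\{|E(P,K_n)|+t(|P|-2)\bigr\},
\]
whose right-hand side matches the claimed value of $r(K_n,t)$ after a routine optimization over partitions of $V(K_n)$.

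The main obstacle is the choice-of-representatives step. The natural attempt using matroid intersection—on the $t$-fold graphic matroid and the partition matroid by color classes—forces the edges across all $t$ trees to be globally distinct, which is much stronger than the "individually rainbow" condition of the problem and therefore yields a weaker bound. Exploiting the flexibility that colors may repeat across (but not within) different trees requires a structural swapping argument: starting from a rainbow $H$ with a Nash-Williams/Tutte deficient partition $P$, swap a representative within some color class to cross the cut of $P$, strictly decreasing the deficiency. Iterating must terminate, and the key technical content is to show that termination can occur only at one of the canonical partitions $P_0$ or the all-singleton partition above—at which point an explicit color count yields the desired bound on $k$ and closes the argument.
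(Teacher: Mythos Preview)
Your lower-bound constructions are correct and coincide with the paper's. The upper-bound strategy, however, has a genuine gap.

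The core issue is that working inside a rainbow subgraph $H$ (one edge per color) and applying Nash--Williams/Tutte can only ever produce $t$ \emph{color-disjoint} rainbow spanning trees, since all edges of $H$ have distinct colors. You correctly flag that this is strictly stronger than the edge-disjoint condition the problem asks for, but your proposed fix---swapping representatives to repair a deficient partition---does not escape the rainbow-$H$ framework, so it still targets the color-disjoint problem. And the color-disjoint threshold is genuinely larger than $r(K_n,t)$. Concretely, take $n=6$, $t=2$, so $r(K_6,2)=\binom{4}{2}+2=8$. Color the five edges incident to a fixed vertex $v$ all with color $c_0$, and use eight further colors on the remaining $K_5$ (two colors repeated once each). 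This is a $9$-coloring of $K_6$, so by Theorem~\ref{anti-Ramsey} it contains two edge-disjoint rainbow spanning trees; but for the bipartition $P=\{\{v\},V\setminus\{v\}\}$ only one color crosses, so by Theorem~\ref{partition} there are no two color-disjoint rainbow spanning trees, and no choice of $H$ can satisfy Nash--Williams for this $P$. Your swapping process simply gets stuck at this bipartition---it is not one of your canonical partitions, and the resulting color count $k\le |E(P,K_6)|+|c(\CR(P,K_6))|\le 10+1=11$ is far from the target bound $k\le 8$.

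What the paper does instead is exactly the step you could not see how to implement: it exploits color repetition \emph{across} trees by first placing multiple edges of the same high-multiplicity color into \emph{different} seed forests $F_1,\dots,F_t$ (so the forests are individually rainbow but share colors with each other), and then invokes a color-disjoint \emph{extension} theorem (Theorem~\ref{extension}) to grow these forests into trees using fresh colors. The final trees are edge-disjoint and individually rainbow but not color-disjoint. The argument is wrapped in an edge-minimal-counterexample induction: one shows that in a minimal counterexample every partition with $|P|\ge 3$ has strict slack in the Nash--Williams inequality, locates a multiply-colored edge whose deletion preserves Inequality~\eqref{eq:colordisjoint}, and derives a contradiction from minimality. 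The $K_n$ formula then drops out of the general $r(G,t)$ formula by a short convexity calculation on $f_G(s)=\max_{|P|=s}|E(P,K_n)|$.
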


Very recently, Jia, Lu, and Zhang \cite{Jia-Lu-Zhang2021}
considered the analogous problem when the host graph is the complete bipartite graphs $K_{p,q}$. Using similar approaches in \cite{Jahanbekam-West16}, they proved that $r(K_{p,q},1)=(p-2)q+1+\delta_{pq}$ for $p\geq q \geq 4$ and $r(K_{p,q},t)=(p-2)q+ t+\delta_{pq}$ for $p\geq q \geq 2t+\sqrt{3t-2}+4$.  Here $\delta_{pq}$ is the Kronecker delta function:
$$
\delta_{pq}=\begin{cases}
1 & \mbox{ if } p=q,\\
0 & \mbox{ otherwise}.
\end{cases}
$$

Both of the results in \cite{Jahanbekam-West16} and \cite{Jia-Lu-Zhang2021} leave a gap of $\Theta(\sqrt{t})$ when the number of edge-disjoint rainbow spanning trees is closer to  the maximum possible number of edge-disjoint spanning trees in $K_n$ and $K_{p,q}$ respectively. 

In the paper, we show a general framework that enables us to determine the anti-Ramsey number $r(G,t)$  for any host graph $G$ and all values of $t$. 
Given a graph $H$ and a partition of the vertex set of $H$, let $\CR(P,H)$ be the set of crossing edges in $H$ whose end vertices belong to different parts in the partition $P$. Let $E(P,H)$ be the set of non-crossing edges of $H$ with respect to $P$, i.e., edges whose two end vertices are contained in the same part in the partition $P$.
\begin{theorem}\label{main}
For any multi-graph $G$, if there is a partition $P_0$ of vertices of $G$ satisfying $|E(G)|-|E(P_0,G)|<t(|P_0|-1)$, then $r(G,t)=|E(G)|$. Otherwise, 
\begin{equation}\label{eq:main}
r(G,t)=\max_{P\colon |P| \geq 3} \{|E(P,G)|+t(|P|-2)\},
\end{equation}
where the maximum is taken among all partitions $P$
(with $|P|\geq 3$) of the vertex set of $G$.
\end{theorem}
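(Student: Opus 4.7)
The first assertion follows immediately from the Nash--Williams--Tutte theorem on packing edge-disjoint spanning trees: the hypothesis $|E(G)|-|E(P_0,G)|<t(|P_0|-1)$ is equivalent to $|\CR(P_0,G)|<t(|P_0|-1)$, which by Nash--Williams--Tutte means $G$ itself does not contain $t$ edge-disjoint spanning trees; hence no edge-coloring can realize $t$ edge-disjoint rainbow ones, and $r(G,t)=|E(G)|$ trivially. From now on I assume $|\CR(P,G)|\geq t(|P|-1)$ for every partition $P$ of $V(G)$, so $G$ admits $t$ edge-disjoint spanning trees, and the task reduces to proving formula \eqref{eq:main}.

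For the lower bound, fix any partition $P=\{V_1,\ldots,V_k\}$ with $k\geq 3$; I construct an edge-coloring of $G$ using $|E(P,G)|+t(k-2)$ colors that avoids $t$ edge-disjoint rainbow spanning trees. Give every non-crossing edge its own unique color. For the crossing edges, I apply a Jahanbekam--West-type scheme adapted to a pair of ``special'' parts $V_1,V_2$: use a palette of $t$ colors on the edges in $\delta(V_1)\cup\delta(V_2)$ (including those between $V_1$ and $V_2$), arranged via a pairing between $\delta(V_1)$-edges and $\delta(V_2)$-edges that couples the colors on the two ``stars''; and give each remaining crossing edge (those between $V_i$ and $V_j$ for $3\leq i<j\leq k$) a fresh distinct color, contributing the remaining $t(k-3)$ colors. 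A case analysis using edge-disjointness across the $t$ trees and the rainbow condition within each tree shows that no such family of $t$ trees can exist, generalizing the verification of Jahanbekam--West \cite{Jahanbekam-West16} for $K_n$ and Jia--Lu--Zhang \cite{Jia-Lu-Zhang2021} for $K_{p,q}$.

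For the upper bound, suppose an edge-coloring of $G$ uses $c>\max_{P:|P|\geq 3}\{|E(P,G)|+t(|P|-2)\}$ colors. The strategy is to extract a rainbow representative $R\subseteq E(G)$ (one edge per color, so $|R|=c$) whose graphic matroid supports $t$ edge-disjoint spanning trees; any such $R$ automatically yields $t$ edge-disjoint rainbow spanning trees. By Nash--Williams--Tutte this amounts to securing $|\CR(P',R)|\geq t(|P'|-1)$ for every partition $P'$. The naive estimate $|\CR(P',R)|\geq c-|E(P',G)|\geq t(|P'|-2)+1$ falls short of $t(|P'|-1)$ by exactly $t-1$, so a global selection argument is required. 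I plan to induct on $|V(G)|$: take a maximizing partition $P^\star$ attaining the bound, choose $R$ maximizing a suitable crossings-potential, and analyze any hypothetical violating $P'$. Either an exchange argument --- swapping an $R$-edge inside a part of $P'$ for a same-color crossing edge, using the slack $c-|E(P',G)|-t(|P'|-2)\geq 1$ --- closes the gap directly; or we contract a part of $P^\star$ (and/or delete redundant multi-edges) to obtain a strictly smaller multigraph whose induced coloring still satisfies the hypothesis, apply the inductive hypothesis, and lift the resulting rainbow spanning trees back to $G$.

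The principal obstacle is the simultaneous control of all partitions in the upper bound: fixing one partition can create violations elsewhere. The $K_n$ case \cite{Lu-Wang-AntiRamsey} exploits the high symmetry of the complete graph, which is unavailable here; for arbitrary host multigraphs we must rely on the induction/exchange scheme above. The most delicate bookkeeping will involve multi-edges (since $G$ is a multigraph), singleton parts (which make naive contractions degenerate), and ensuring that the extremal partition $P^\star$ interacts benignly with any partition witnessing a potential Nash--Williams--Tutte violation of the chosen representative $R$.
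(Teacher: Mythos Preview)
Your lower-bound construction is miscounted. You assert that the crossing edges among $V_3,\dots,V_k$ ``contribute the remaining $t(k-3)$ colors,'' but for an arbitrary host $G$ the number of such edges is not $t(k-3)$; giving each a fresh color will in general overshoot $|E(P,G)|+t(k-2)$. (Likewise, using exactly $t$ colors on $\delta(V_1)\cup\delta(V_2)$ via a ``pairing'' presupposes structure present in $K_n$ or $K_{p,q}$ but not in a general multigraph.) The paper's construction is much simpler and works for any $G$: choose any $t(|P|-2)-1$ crossing edges and color them distinctly, color \emph{all} remaining crossing edges with a single new color $c_0$, and color the non-crossing edges distinctly. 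Since every spanning tree has $|P|-1$ crossing edges and a rainbow one uses $c_0$ at most once, $t$ edge-disjoint rainbow spanning trees would require $t(|P|-2)$ non-$c_0$ crossing edges, one more than available.

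For the upper bound, your rainbow-representative-plus-Nash--Williams plan is a genuinely different route from the paper, and the ``principal obstacle'' you flag is exactly where the paper's machinery takes over. The paper does \emph{not} try to pick a single $R$ and then repair the $t-1$ shortfall by exchanges or by induction on $|V(G)|$. Instead it runs an induction on $|E(G)|$ via an edge-minimal counterexample and relies on the color-disjoint \emph{extension} theorem of \cite{Lu-Wang-AntiRamsey} (Theorem~\ref{extension} here): first one shows (Claim~\ref{BaseCase}) that in a minimal counterexample every partition with $|P|\geq 3$ has $|\CR(P,G)|\geq t(|P|-1)+1$, by greedily loading edges of repeated color into $t$ small rainbow forests and invoking Lemma~\ref{Fi-large-enough2} to extend them; then (Claim~\ref{inductivestep}) one locates an edge of repeated color whose deletion preserves \eqref{eq:colordisjoint}, contradicting minimality. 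Your exchange step (``swap an $R$-edge inside a part of $P'$ for a same-color crossing edge'') fails whenever every edge of that color lies inside the part, and your contract-and-lift induction on $|V(G)|$ is not developed enough to see that the color-count hypothesis survives contraction or that lifted trees remain rainbow and edge-disjoint. Note also that the Lu--Wang machinery \emph{does} generalize beyond $K_n$ --- it is precisely what the paper uses --- so the remark that its symmetry is ``unavailable here'' points in the wrong direction.
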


As a corollary, we derive an explicit formula for the anti-Ramsey number of $t$ edge-disjoint rainbow spanning trees in multipartite graphs. For positive integers $n_1, n_2, \ldots, n_r$ with $\sum_{i=1}^r n_i = n$, let 
$K_{n_1, n_2,\ldots, n_r}$ be the complete $r$-partite graphs with vertex parts of size $n_1, n_2,\ldots, n_r$ respectively.
\begin{theorem}\label{Kpq:main}
For any $n_1\geq n_2\geq \cdots \geq n_r>0$ and $n=n_1+n_2+\cdots+n_r$,
we have
\begin{equation}
r(K_{n_1, n_2,\ldots, n_r},t)=
\begin{cases}
|E(K_{n_1, n_2,\ldots, n_r})| &  \hspace*{-30mm}\mbox{ if } t(n-1)> |E(K_{n_1, n_2,\ldots, n_r})| \mbox{ or } \sum_{i=2}^r n_i<t,\\
\max\{t(n-2), |E(K_{n_1-2, n_2,\ldots, n_r})|+ t +\delta_{n_1n_2}\} & \mbox{ otherwise.}
\end{cases}
\end{equation}
\end{theorem}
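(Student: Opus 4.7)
The plan is to apply Theorem~\ref{main} to the multipartite host $G = K_{n_1, n_2, \ldots, n_r}$ and carry out the resulting optimization. Throughout, write $s := \sum_{i \geq 2} n_i$, so $n = n_1 + s$.

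For the first case of Theorem~\ref{main}, I observe that the subcase $s < t$ already forces $t(n-1) > |E(G)|$: from $|E(G)| = n_1 s + \sum_{2 \leq i < j} n_i n_j \leq n_1 s + \binom{s}{2}$, a short manipulation yields $t(n-1) - |E(G)| \geq n_1(t - s) + (s-1)(t - s/2) > 0$ when $s < t$. In either subcase the partition $P_0$ into $n$ singletons has $|E(P_0, G)| = 0$, so $|E(G)| - |E(P_0, G)| < t(|P_0| - 1)$ and Theorem~\ref{main} gives $r(G, t) = |E(G)|$.

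For the second case ($t(n-1) \leq |E(G)|$ and $s \geq t$) I apply the formula $r(G, t) = \max_{P \colon |P| \geq 3} \{|E(P, G)| + t(|P| - 2)\}$. The lower bound comes from two partitions: the $n$-singleton partition gives $t(n-2)$; and the three-part partition $\{V \setminus \{x, y\}, \{x\}, \{y\}\}$ with $x, y \in V_1$ when $n_1 > n_2$, and $x \in V_1, y \in V_2$ when $n_1 = n_2$, gives $|E(K_{n_1 - 2, n_2, \ldots, n_r})| + t + \delta_{n_1 n_2}$, since $|E(G[V \setminus \{x, y\}])| = |E(K_{n_1 - 2, n_2, \ldots, n_r})| + \delta_{n_1 n_2}$ in both subcases.

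For the upper bound I rewrite $|E(P, G)| + t(|P| - 2) = t(n - 2) + \sum_i \psi(U_i)$, where $\psi(U) := |E(G[U])| - t(|U| - 1)$ and singletons contribute $0$. The key technical tool is a \emph{merge inequality}: if $U_1, U_2 \subseteq V(G)$ are disjoint non-singleton subsets with $G[U_1], G[U_2]$ both $t$-edge-connected, then setting $a_{i\ell} := |U_i \cap V_\ell|$,
\[
e_G(U_1, U_2) = |U_1||U_2| - \sum_\ell a_{1\ell} a_{2\ell} \geq |U_1||U_2| - (\max_\ell a_{1\ell})|U_2| \geq t \cdot |U_2| \geq 2t,
\]
using $\max_\ell a_{1\ell} \leq |U_1| - t$ from the $t$-edge-connectivity of $G[U_1]$. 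Combined with the identity $\psi(U_1 \cup U_2) = \psi(U_1) + \psi(U_2) + (e_G(U_1, U_2) - t)$, merging two such parts strictly increases $\sum_i \psi(U_i)$, and splitting a non-$t$-edge-connected part along a cut of size $< t$ has the same effect. Respecting the constraint $|P| \geq 3$, iterating these moves shows any optimal partition with $|P| \geq 4$ has at most one non-singleton part $U_1$ of size $\leq n - 3$; a monotonicity argument in $|U_1|$ (using $\psi(U_1) - \psi(U_1 \setminus \{v\}) = \deg_{G[U_1]}(v) - t \geq 0$ in the generic regime $s > t$, which I verify holds in Case~2 apart from the trivial edge-case $G = K_{n_1, 1}$, $t = 1$) then bounds $\psi(U_1) \leq \psi(V \setminus \{x, y\})$.

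The main obstacle is the $|P| = 3$ case with two or three non-singleton parts, where merging to $|P| = 2$ is forbidden. Here I use a \emph{merge-and-resplit} move: given $P = \{U_1, U_2, U_3\}$ with $U_1, U_2$ non-singleton, form $P' = \{U_1 \cup U_2 \setminus \{u\}, \{u\}, U_3\}$ for a suitably chosen $u \in U_1 \cup U_2$. The change in $\phi$ is $e_G(U_1, U_2) - \deg_{G[U_1 \cup U_2]}(u)$. Choosing $u$ to realize the minimum degree of $G[U_1 \cup U_2]$ (which, since the bipartition $U_1 \vert U_2$ is itself a cut of $G[U_1 \cup U_2]$, satisfies $\deg_{G[U_1 \cup U_2]}(u) \leq e_G(U_1, U_2)$ for multipartite graphs where min cut equals min degree), the move does not decrease $\phi$ and reduces the number of non-singleton parts by one. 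Iterating reduces to the single-non-singleton $|P| = 3$ case (or to all-singletons), whose value we have already bounded by $\max(t(n-2), |E(K_{n_1 - 2, n_2, \ldots, n_r})| + t + \delta_{n_1 n_2})$, completing the proof.
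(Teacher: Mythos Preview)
Your approach is genuinely different from the paper's. The paper observes that
$f_G(s):=\max_{|P|=s}|E(P,G)|$ is achieved by repeatedly peeling a vertex from the
largest current part (Proposition~\ref{prop:maximum_partition}), then shows
$f_G(s)-2f_G(s+1)+f_G(s+2)=\delta_{n_1'n_2'}\geq 0$, so $f_G$ is concave upward.
Hence both $\max_{2\leq s\leq n}\{f_G(s)+t(s-1)\}$ and
$\max_{3\leq s\leq n}\{f_G(s)+t(s-2)\}$ are attained at the interval endpoints,
and the theorem follows by evaluating $f_G(2),f_G(3),f_G(n)$. This is short and
handles the ``which case of Theorem~\ref{main} are we in'' question and the
$|P|\geq 3$ optimisation simultaneously. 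Your local-move argument
(merge/split/merge-and-resplit) is more hands-on and in principle works, but it
is considerably longer and leaves more to check.

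There is, however, a real gap. In your second case you \emph{assume} the formula
$r(G,t)=\max_{|P|\geq 3}\{|E(P,G)|+t(|P|-2)\}$ applies, but Theorem~\ref{main}
gives this formula only when \emph{no} partition $P_0$ satisfies
$|\CR(P_0,G)|<t(|P_0|-1)$. You have only shown the converse direction: if
$t(n-1)>|E(G)|$ then the singleton partition is a witness. You never verify that
when $t(n-1)\leq|E(G)|$ and $s\geq t$ every partition satisfies
$|\CR(P,G)|\geq t(|P|-1)$. (The paper gets this for free from concavity: the
maximum of $f_G(s)+t(s-1)$ over $2\leq s\leq n$ is at $s=2$ or $s=n$, and those
two endpoint inequalities are exactly $s\geq t$ and $t(n-1)\leq|E(G)|$.) Your own
machinery can fill this gap --- run the merge/split process unconstrained to
reach one $t$-edge-connected block $U$ plus singletons, then grow $U$ to $V$ one
vertex at a time, using that if $G[U]$ has minimum degree $\geq t$ then every
$w\notin U$ has $|U|-|U\cap V_{\ell(w)}|\geq t$ neighbours in $U$ --- but this
argument is absent from the write-up.

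A smaller issue: your monotonicity statement
$\psi(U_1)-\psi(U_1\setminus\{v\})=\deg_{G[U_1]}(v)-t\geq 0$ is written in the
``remove a vertex'' direction, whereas you need the ``add a vertex'' direction to
bound $\psi(U_1)\leq\psi(V\setminus\{x,y\})$; and the justification ``in the
generic regime $s>t$'' is not enough by itself, since for an arbitrary $U_1$ one
can have $\deg_{G[U_1]}(v)=0$. What you actually need (and what does hold) is
that the terminal $U_1$ produced by your process is $t$-edge-connected, which
forces $|U_1|-\max_\ell|U_1\cap V_\ell|\geq t$ and hence lets you add any vertex.
You should also check that merge-and-resplit preserves this property; it does
(the merged block $A\cup B$ has minimum degree $\geq 2t$, so removing one vertex
leaves minimum degree $\geq 2t-1\geq t$), but this is not stated.
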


In particular, Theorem \ref{Kpq:main} settles $r(K_{pq},t)$ completely. We also re-derive Theorem \ref{anti-Ramsey} as a corollary.
\begin{corollary}\label{cor:Kpq}
For any $p\geq q$, we have
\begin{equation}
r(K_{p,q},t)=
\begin{cases}
pq & \mbox{ if } t(p+q-1)>pq  \mbox{ or } q<t,\\
\max\{t(p+q-2), (p-2)q+t+\delta_{pq}\} & \mbox{ otherwise.}
\end{cases}
\end{equation}
\end{corollary}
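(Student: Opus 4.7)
The plan is to derive Corollary \ref{cor:Kpq} as a direct specialization of Theorem \ref{Kpq:main} to the bipartite case $r=2$. I set $n_1=p$ and $n_2=q$, so that $n=p+q$ and $|E(K_{p,q})|=pq$. The entire argument is a transcription; the work is bookkeeping rather than new ideas.

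First, in the initial branch of Theorem \ref{Kpq:main}, the two sufficient conditions are $t(n-1)>|E(K_{n_1,\ldots,n_r})|$ and $\sum_{i=2}^{r}n_i<t$. Under the substitution $r=2$, $n_1=p$, $n_2=q$, these become respectively $t(p+q-1)>pq$ and $q<t$, which are exactly the two disjunctive hypotheses of the first branch of the corollary. In this regime Theorem \ref{Kpq:main} yields $r(K_{p,q},t)=|E(K_{p,q})|=pq$, matching the corollary.

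In the remaining branch of Theorem \ref{Kpq:main}, I compute each term of the maximum explicitly. The quantity $|E(K_{n_1-2,n_2,\ldots,n_r})|$ becomes $|E(K_{p-2,q})|=(p-2)q$, the Kronecker term $\delta_{n_1n_2}$ reduces to $\delta_{pq}$, and $t(n-2)=t(p+q-2)$. Substituting into the right-hand side of (\ref{eq:main}) gives the expression $\max\{t(p+q-2),\ (p-2)q+t+\delta_{pq}\}$, exactly matching the second branch of the corollary.

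No step presents a genuine obstacle: since Corollary \ref{cor:Kpq} is obtained from Theorem \ref{Kpq:main} by pure substitution at $r=2$, the only care required is to observe that any degenerate range where $p-2<0$ (forcing $p=q=1$ in view of $p\geq q$) falls automatically into the first branch whenever the anti-Ramsey problem is non-trivial, so no separate boundary analysis is needed. The real content of the corollary lies in Theorem \ref{Kpq:main}, and all that is being asserted here is the readability of the bipartite formula, including the previously open regime in which $q$ is close to the maximum number $\lfloor pq/(p+q-1)\rfloor$ of edge-disjoint spanning trees of $K_{p,q}$.
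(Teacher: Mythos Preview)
Your proposal is correct and matches the paper's approach: Corollary~\ref{cor:Kpq} is not given a separate proof in the paper but is simply the specialization of Theorem~\ref{Kpq:main} to $r=2$, $n_1=p$, $n_2=q$, exactly as you carry out. One minor slip: the label you cite, \eqref{eq:main}, points to the formula in Theorem~\ref{main} rather than Theorem~\ref{Kpq:main}; the substitution you actually perform is into the latter.
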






The paper is organized as follows. In Section \ref{Notation}, we define the basic machinery needed to prove Theorem \ref{main}. We turn to the proof of Theorem \ref{main} in Section \ref{Proof-of-Main}, and in Section \ref{Derivation} we show Theorems \ref{anti-Ramsey} and \ref{Kpq:main} are direct consequences of Theorem \ref{main}.

\section{Notation and tools}\label{Notation}

Let $G$ be an edge-colored multigraph and $\{ F_1, \ldots, F_t$\} be a collection of $t$ edge-disjoint
rainbow spanning forests in $G$. We say $\{ F_1, \ldots, F_t\}$ are \textit{color-disjoint} if no color appears in the edges of more than one member of $\{ F_1, \ldots, F_t$\}.
We are also interested in whether $F_1,\ldots, F_t$ can be \textit{extended} to
$t$ edge-disjoint rainbow spanning trees $T_1,\ldots, T_t$ in $G$, i.e., $E(F_i)\subseteq E(T_i)$ in $G$ for each $i$. We call such an extension {\em color-disjoint} if all edges in $\cup_i\lp E(T_i)\setminus E(F_i)\rp$
have distinct colors and these colors are different from the colors appearing in the edges of $\cup_i E(F_i)$. 

The main tools we use to show Theorem \ref{main} are two structure theorems that characterize the existence of $t$ color-disjoint rainbow spanning trees or the existence of a color-disjoint extension of $t$ edge-disjoint rainbow spanning forests into $t$ edge-disjoint rainbow spanning trees. When $t=1$, Broersma and Li \cite{Broersma-Li97} showed that determining the largest rainbow spanning forest of a graph can be solved by applying the \textit{Matroid Intersection Theorem}. The following characterization was established by Schrijver \cite{Schrijver03} using matroid methods, and later given graph theoretical proofs by Suzuki \cite{Suzuki06} and also by Carraher and Hartke \cite{Carraher-Hartke17}.

\begin{theorem}{(\cite{Schrijver03, Suzuki06, Carraher-Hartke17})}\label{Suzuki}
An edge-colored connected graph $G$ has a rainbow spanning tree if and only if for every $2\leq k\leq n$ and every partition of $G$ with $k$ parts, at least $k-1$ different colors are represented in edges between partition classes.
\end{theorem}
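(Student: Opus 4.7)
My plan is to use the Matroid Intersection Theorem of Edmonds as the central tool. The necessity direction should follow in a few lines: given any rainbow spanning tree $T$ and any $k$-partition $P$ of $V(G)$, contracting the parts of $P$ yields a connected multigraph on $k$ vertices, so at least $k-1$ edges of $T$ must cross $P$, and since $T$ is rainbow these contribute $\ge k-1$ distinct colors among the crossing edges of $G$.

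For sufficiency I would set up two matroids on the ground set $E=E(G)$. The first is the graphic matroid $M_1$, whose independent sets are forests and whose rank satisfies $r_1(S)=|V(G)|-c(V(G),S)$, where $c(V(G),S)$ counts connected components of the spanning subgraph $(V(G),S)$. The second is the partition matroid $M_2$ grouped by color, in which a set of edges is independent iff its colors are pairwise distinct, so $r_2(S)$ equals the number of distinct colors appearing in $S$. The key observation is that a rainbow spanning tree is exactly a common independent set of size $n-1$.

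Matroid intersection then gives
\[
\max\{|I| : I \text{ independent in both } M_1 \text{ and } M_2\} = \min_{S\subseteq E}\bigl(r_1(S)+r_2(E\setminus S)\bigr),
\]
so the task reduces to showing $r_1(S)+r_2(E\setminus S)\ge n-1$ for every $S\subseteq E$. For a fixed $S$, let $P_S$ denote the partition of $V(G)$ into the components of $(V(G),S)$ and set $k=|P_S|$, so that $r_1(S)=n-k$. Any edge of $G$ that crosses $P_S$ must lie in $E\setminus S$, since both endpoints of any edge of $S$ lie in the same component of $(V(G),S)$. If $k\ge 2$, the hypothesis applied to $P_S$ supplies at least $k-1$ distinct colors on the crossing edges, so $r_2(E\setminus S)\ge k-1$, whence $r_1(S)+r_2(E\setminus S)\ge (n-k)+(k-1)=n-1$; the $k=1$ case is immediate from $r_1(S)=n-1$.

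I do not expect a serious obstacle: the main delicacy is purely bookkeeping, namely checking that the components of an arbitrary $S$ genuinely produce a partition to which the color-count hypothesis applies, and that crossing edges of this partition are all absent from $S$. Once those steps are in place, the min-max formula of matroid intersection converts the combinatorial hypothesis directly into the required rank inequality without any case analysis or exchange argument. A more graph-theoretic route, iteratively augmenting a maximum rainbow spanning forest via color swaps as in Carraher--Hartke, is also available but noticeably more fiddly; for a clean proof I would prefer the matroid version above.
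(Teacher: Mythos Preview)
Your proof is correct. However, note that the paper does not actually prove this theorem: it is stated as a cited result, with the surrounding text explicitly attributing the matroid approach to Schrijver and graph-theoretic proofs to Suzuki and to Carraher--Hartke. So there is no ``paper's own proof'' to compare against.

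That said, your argument is precisely the matroid-intersection route the paper alludes to via the Schrijver citation. The necessity direction is immediate as you wrote it. For sufficiency, your setup of the graphic matroid $M_1$ and the color partition matroid $M_2$ is standard, and the key step---that the connected components of $(V(G),S)$ form a partition $P_S$ whose crossing edges all lie in $E\setminus S$---is exactly what is needed to invoke the hypothesis and obtain $r_1(S)+r_2(E\setminus S)\ge n-1$. The $k=1$ case is handled correctly. There are no gaps. The alternative graph-theoretic proofs (Suzuki; Carraher--Hartke) proceed by augmenting a maximum rainbow forest via color exchanges, which as you note is more hands-on; your choice of the matroid argument is the cleaner of the two and matches what the paper implicitly endorses as the Schrijver proof.
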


The above result can be generalized to $t$ color-disjoint rainbow spanning trees in Theorem \ref{partition} using similar matroid methods by Schrijver \cite{Schrijver03} or using graph-theoretical approachs in \cite{Lu-Wang-AntiRamsey}.

\begin{theorem}\cite{Schrijver03, Lu-Wang-AntiRamsey}\label{partition}
An edge-colored multigraph $G$ has $t$ pairwise color-disjoint rainbow spanning trees if and only if for every partition $P$ of $V(G)$ into $|P|$ parts, at least $t(|P|-1)$ distinct colors are represented in edges between partition classes.
\end{theorem}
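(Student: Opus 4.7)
The plan is to prove both directions separately. For necessity, given $t$ pairwise color-disjoint rainbow spanning trees $T_1,\ldots,T_t$ and any partition $P$ of $V(G)$, each $T_i$ must contain at least $|P|-1$ edges crossing $P$ (so that the contracted tree spans the $|P|$ parts), these carry $|P|-1$ distinct colors because $T_i$ is rainbow, and the color palettes across the $t$ trees are disjoint by hypothesis; hence the crossing edges of $T_1\cup\cdots\cup T_t$ exhibit at least $t(|P|-1)$ distinct colors, all of which are colors on edges of $G$ crossing $P$.

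For sufficiency I would invoke Edmonds' matroid intersection theorem. Introduce two matroids on $E(G)$: the \emph{color partition matroid} $M_1$, whose blocks are the color classes (so $r_1(A)$ equals the number of distinct colors represented in $A$), and the \emph{$t$-fold graphic matroid union} $M_2$, whose independent sets are the edge subsets that decompose into $t$ edge-disjoint forests. An edge set $F$ arises as the disjoint union of $t$ pairwise color-disjoint rainbow spanning trees precisely when $F$ is common-independent in $M_1$ and $M_2$ with $|F|=t(|V(G)|-1)$. Since $r_2(E(G))\le t(|V(G)|-1)$ automatically, it suffices by matroid intersection to verify
\begin{equation*}
r_1(A)+r_2(E(G)\setminus A)\;\geq\; t(|V(G)|-1)\qquad\text{for every } A\subseteq E(G).
\end{equation*}

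The matroid union rank formula yields $r_2(E(G)\setminus A)=\min_{C\subseteq E(G)\setminus A}\bigl(|E(G)\setminus(A\cup C)|+t(|V(G)|-\kappa(C))\bigr)$, where $\kappa(C)$ denotes the number of connected components of the spanning subgraph $(V(G),C)$. Fix $A$ and any minimizing $C$, and let $P$ be the partition of $V(G)$ into the components of $(V(G),C)$, so $|P|=\kappa(C)$. No edge of $C$ crosses $P$, so every crossing edge of $P$ lies in $A$ or in $E(G)\setminus(A\cup C)$; by the hypothesis applied to $P$, at least $t(\kappa(C)-1)$ distinct colors appear on these crossing edges, and these colors are contained in the colors on $A$ together with the colors on $E(G)\setminus(A\cup C)$. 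Bounding the second contribution by cardinality gives $r_1(A)+|E(G)\setminus(A\cup C)|\geq t(\kappa(C)-1)$, and adding $t(|V(G)|-\kappa(C))$ to both sides recovers exactly the desired inequality. The conceptual crux is recognising that both constraints---``rainbow'' and ``decomposes into $t$ spanning trees''---are matroid conditions on $E(G)$; once this matroid intersection setup is in place, the hypothesis translates into the required rank inequality by a single short calculation using the matroid union rank formula for the graphic matroid.
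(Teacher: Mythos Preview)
Your argument is correct in both directions. Note, however, that the paper does not supply its own proof of this statement: Theorem~\ref{partition} is quoted as a known tool, with attribution to Schrijver (matroid methods) and to Lu--Wang (a direct graph-theoretic argument), and is then used as a black box. Your proof is precisely the matroid-intersection route the paper ascribes to Schrijver---encode ``rainbow'' via the color partition matroid, encode ``decomposes into $t$ spanning forests'' via the $t$-fold union of the graphic matroid, and verify the Edmonds min--max condition using the Nash-Williams/Tutte rank formula for the union. So there is no in-paper proof to compare against, but your derivation matches the intended approach of one of the cited sources.
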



Given a set of edges $E$, we use $c(E)$ to denote the set of colors that appear in some edge of $E$. We need the following result on the existence of a color-disjoint extension of rainbow spanning forests to rainbow spanning trees.

\begin{theorem}\cite{Lu-Wang-AntiRamsey}\label{extension}
A family of $t$ edge-disjoint rainbow spanning forests $F_1, \ldots, F_t$ 
has a color-disjoint extension in $G$ if and only if for every partition $P$ of $G$ into $|P|$ parts,
\begin{equation}
  \label{eq:ext}
  |c(\CR(P,G'))|+\sum_{i=1}^t|\CR(P, F_i)|\geq t(|P|-1),
\end{equation}
where $G'$ is the spanning subgraph of $G$ by removing all edges with colors appearing in $\{F_1, \ldots, F_t\}$.
\end{theorem}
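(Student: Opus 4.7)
My plan is to prove the two directions separately, with necessity being a counting argument and sufficiency requiring matroid machinery.

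For necessity, I would argue directly. Suppose $F_1,\ldots,F_t$ extends color-disjointly to rainbow spanning trees $T_1,\ldots,T_t$ with $F_i\subseteq T_i$. Fix any partition $P$ of $V(G)$. Since each $T_i$ is connected, $|\CR(P,T_i)|\geq |P|-1$, so
\[
t(|P|-1)\;\leq\;\sum_{i=1}^t|\CR(P,T_i)|\;=\;\sum_{i=1}^t|\CR(P,F_i)|+\sum_{i=1}^t|\CR(P,T_i\setminus F_i)|.
\]
The edges in $\bigcup_i(T_i\setminus F_i)$ lie in $G'$ (they avoid colors of $\bigcup_i F_i$) and have pairwise distinct colors by color-disjointness, so $\sum_i|\CR(P,T_i\setminus F_i)|\leq |c(\CR(P,G'))|$, which yields \eqref{eq:ext}.

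For sufficiency, I would cast the problem as matroid intersection on $E(G')$. Let $M_1$ be the rainbow matroid ($S$ independent iff the edges of $S$ have pairwise distinct colors, so $r_{M_1}(S)=|c(S)|$), and let $M_2=N_1\vee\cdots\vee N_t$ be the matroid union of the forest-extension matroids, where $N_i$ has independent sets $S_i\subseteq E(G')$ with $F_i\cup S_i$ a forest. A color-disjoint extension is precisely a common independent set in $M_1\cap M_2$ of size $\sum_i(n-1-|F_i|)$. By the Matroid Intersection Theorem together with the Nash-Williams/Edmonds rank formula $r_{M_2}(Y)=\min_{Z\subseteq Y}\bigl(|Y\setminus Z|+\sum_i r_{N_i}(Z)\bigr)$ and the identity $r_{N_i}(Z)=n-|F_i|-\mu_i(Z)$ (with $\mu_i(Z)$ the number of components of $F_i\cup Z$), after a short rearrangement the existence of a common independent set of the desired size reduces to the combinatorial condition: for every $Z\subseteq E(G')$,
\[
|c(E(G')\setminus Z)|\;\geq\;\sum_{i=1}^t\bigl(\mu_i(Z)-1\bigr).
\]

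The main obstacle is then to show this ``$Z$-condition'' is equivalent to the partition condition \eqref{eq:ext}. One direction (partition $\Rightarrow$ $Z$) is the easier half: given $Z$, applying \eqref{eq:ext} to the partition $P$ defined by the components of $F_i\cup Z$ and iterating over $i$, or applying it to the common refinement of these partitions, yields the required bound after noting that $F_i\cup Z$ contributes no crossing edges with respect to its own component partition. The trickier direction is to show that every partition $P$ in \eqref{eq:ext} is covered by some well-chosen $Z$; the natural candidate is $Z=E(G')\setminus \CR(P,G')$, from which one recovers the partition condition by checking $\mu_i(Z)\geq$ something like $|P|-|\CR(P,F_i)|+\text{const}$ via a careful accounting of how $F_i$-edges within parts of $P$ merge components. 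This bookkeeping — tracking how non-crossing forest edges reduce the number of components of $F_i\cup Z$ — is where I expect the proof to require the most care. As an alternative to matroid intersection, one could attempt a direct reduction to Theorem \ref{partition} by building an auxiliary edge-colored multigraph consisting of $G'$ together with each $F_i$ recolored by fresh labels $\alpha_{i,e}$; the hypothesis \eqref{eq:ext} translates exactly to the hypothesis of Theorem \ref{partition} for this auxiliary graph, but the final step of reassigning the resulting $t$ color-disjoint rainbow trees so that $F_i$ ends up inside the $i$-th tree would require an additional matroid-exchange argument, bringing us back to essentially the same technical core.
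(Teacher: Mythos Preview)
The paper does not contain a proof of this theorem: it is quoted verbatim from \cite{Lu-Wang-AntiRamsey} and used as a black box in the proofs of Lemma~\ref{Fi-large-enough2} and Claim~\ref{BaseCase}. There is therefore nothing in the present paper to compare your proposal against.

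That said, your proposal is a sensible outline. The necessity direction is clean and correct as written. For sufficiency, the matroid-intersection setup with $M_1$ the partition (rainbow) matroid on $E(G')$ and $M_2$ the union of the contraction matroids $N_i$ is exactly the right framework, and your rank computation $r_{N_i}(Z)=n-|F_i|-\mu_i(Z)$ is accurate. The reduction to the ``$Z$-condition'' via the Matroid Intersection Theorem and the Nash--Williams rank formula is standard, though you have suppressed the algebra. The one place you flag as delicate --- showing the $Z$-condition is implied by the partition condition \eqref{eq:ext} --- does go through: given $Z$, take $P$ to be the partition into connected components of the graph $(V(G),Z\cup\bigcup_i F_i)$; then $\CR(P,G')\subseteq E(G')\setminus Z$ and each $F_i$ contributes no crossing edges, while $\mu_i(Z)\geq |P|$ for every $i$, so \eqref{eq:ext} applied to this single $P$ already gives $|c(E(G')\setminus Z)|\geq |c(\CR(P,G'))|\geq t(|P|-1)\geq \sum_i(\mu_i(Z)-1)$. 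Wait --- this last inequality goes the wrong way since $\mu_i(Z)\geq |P|$; you would need $\mu_i(Z)\leq |P|$ instead. The correct move is to take $P$ to be the \emph{common refinement} of the component partitions of the $F_i\cup Z$, for which $|P|-1\geq \sum_i(\mu_i(Z)-1)$ can fail in general, so one actually needs a more careful argument (e.g.\ a submodularity or exchange step). This is precisely the technical core you anticipated; your alternative route via Theorem~\ref{partition} on an auxiliary multigraph runs into the same reassignment issue you identify. So your diagnosis of where the work lies is accurate, but the proof is not yet complete as sketched.
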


For the remainder of this section, we may assume that the graph $G$ satisfies for any partition $P$
\begin{equation}\label{eq:colordisjoint}
    |\CR(P,G)|\geq t(|P|-1).
\end{equation}
If not, then $G$ cannot have $t$ edge-disjoint spanning trees. We can then color all edges
in distinct colors and still avoid $t$ edge-disjoint rainbow spanning trees. Thus
$r(G,t)=|E(G)|$ holds. The following lemma proves the lower bounds.
\begin{lemma}\label{general-lowerbound}
Assume Inequalities \eqref{eq:colordisjoint} holds for a multi-graph $G$.
Then, for any partition $P$ of the vertex set $V(G)$ with $|P|\geq 3$, we have
\begin{equation}\label{eq:lowerbound}
r(G,t)\geq |E(P,G)|+t(|P|-2).
\end{equation}
\end{lemma}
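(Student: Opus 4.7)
The plan is to construct an explicit edge coloring of $G$ with exactly $|E(P,G)|+t(|P|-2)$ colors that admits no $t$ edge-disjoint rainbow spanning trees, which immediately yields $r(G,t)\geq |E(P,G)|+t(|P|-2)$. Set $k=|P|$. First, I would give each non-crossing edge in $E(P,G)$ its own private color, using $|E(P,G)|$ colors in total. Next, since assumption \eqref{eq:colordisjoint} supplies $|\CR(P,G)|\geq t(k-1)> t(k-2)-1$, I fix an arbitrary subset $S\subseteq \CR(P,G)$ with $|S|=t(k-2)-1$, assign each edge of $S$ its own new distinct color, and color all remaining crossing edges in $\CR(P,G)\setminus S$ with one further common color $c^*$. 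The leftover set $\CR(P,G)\setminus S$ has at least $t+1$ edges, so $c^*$ is indeed used, and the total number of colors is exactly $|E(P,G)|+(t(k-2)-1)+1=|E(P,G)|+t(k-2)$.

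I would then verify that no $t$ edge-disjoint rainbow spanning trees $T_1,\dots,T_t$ of $G$ can exist under this coloring. Since each $T_i$ is spanning, its crossing edges must connect all $k$ parts of $P$, so $T_i$ has at least $k-1$ crossing edges; as $T_i$ is rainbow, these crossing edges carry pairwise distinct colors. The color $c^*$ appears on every edge of $\CR(P,G)\setminus S$, so at most one crossing edge of $T_i$ can be colored $c^*$; hence $T_i$ must use at least $k-2$ edges from $S$. Summing over the $t$ edge-disjoint trees, they collectively use at least $t(k-2)$ distinct edges of $S$, but $|S|=t(k-2)-1$, a contradiction.

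The only nontrivial step is the observation that each rainbow spanning tree consumes at least $k-2$ of the $|S|=t(k-2)-1$ privately-colored crossing edges; the rest is a one-step pigeonhole argument. I anticipate no serious obstacle, as the hypothesis \eqref{eq:colordisjoint} is precisely what guarantees $|\CR(P,G)|$ is large enough both to carve out the set $S$ of the desired size and to leave at least one crossing edge on which to use the common color $c^*$.
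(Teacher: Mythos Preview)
Your proof is correct and follows essentially the same construction and argument as the paper: color $t(|P|-2)-1$ crossing edges distinctly, give all remaining crossing edges a single common color, color non-crossing edges distinctly, and observe that $t$ edge-disjoint rainbow spanning trees would collectively need $t(|P|-2)$ crossing edges outside the common color. Your write-up is in fact slightly more careful than the paper's, since you explicitly verify via \eqref{eq:colordisjoint} that $|\CR(P,G)|\geq t(|P|-1)$ is large enough both to select the set $S$ and to leave at least one crossing edge for the color $c^*$, ensuring the total color count is exactly $|E(P,G)|+t(|P|-2)$.
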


\begin{proof}
When $|P|\geq 3$, we can arbitrarily select $t(|P|-2)-1$ crossing edges (with respect to $P$) and color them in distinct colors. Color the rest of crossing edges in a new color $c_0$ and color the remaining non-crossing edges in (a new set of) distinct colors. We claim that $G$ contains no $t$ edge-disjoint rainbow spanning trees. Otherwise, each rainbow spanning tree contains $(|P|-2)$ crossing edges not in color $c_0$. So $G$ has $t(|P|-2)$ crossing edges not in color $c_0$, giving a contradiction.
\end{proof}

Now we assume $G$ is equipped with an edge coloring with $\max_{P\colon|P|\geq3}\{|E(P,G)|+t(|P|-2)\}+1$ colors.
For a partition $P$ of the vertices of $G$, we define
\begin{align*}
\eta(P,G)&:=|E(P,G)|-|c(E(P,G))|\\
\shortintertext{and}
\xi(P,G)&:=|c(E(P,G))\cap c(\CR(P,G))|.
\end{align*}

\noindent
Applying the inclusion-exclusion principle on the two sets $c(E(P,G))$ and $c(\CR(P,G))$, we have
\begin{align*}
|c(E(G))| -|E(P,G)| &=
|c(E(P,G)) \cup c(\CR(P,G))| -|E(P,G)|\\
&= |c(E(P,G))|+|c(\CR(P,G))|-|c(E(P,G)) \cap c(\CR(P,G))| -|E(P,G)|\\
&=|c(\CR(P,G))| -\eta(P,G) -\xi(P,G).
\end{align*}

It follows that 
\begin{equation}\label{eq:CRPG}
 |c(\CR(P,G))| = |c(E(G))| -|E(P,G)| +\eta(P,G) +\xi(P,G). 
\end{equation}
Since $c(E(G)) \geq \max_{P\colon|P|\geq 3}\{|E(P,G)|+t(|P|-2)\}+1$,   we then have the following corollary.
\begin{corollary}\label{crossingcolors} For any non-trivial partition $P$ of $V(G)$, we have
\begin{enumerate}
    \item if $|P|\geq 3$, then
    $$|c(\CR(P,G))|\geq t(|P|-2)+\eta(P,G) + \xi(P,G) +1;$$
    \item if $|P|=2$ and $|c(E(G)|>|E(P,G)|$, then
   $$|c(\CR(P,G))|\geq \eta(P,G) + \xi(P,G) +1.$$  
\end{enumerate}
\end{corollary}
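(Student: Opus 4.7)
The plan is to apply the identity \eqref{eq:CRPG} established just before the statement, combined with the standing lower bound on $|c(E(G))|$ imposed by our assumption on the coloring. Recall that this assumption gives
\begin{equation*}
|c(E(G))| \geq \max_{P\colon |P|\geq 3}\{|E(P,G)|+t(|P|-2)\}+1.
\end{equation*}
Since the maximum dominates each individual term, this implies $|c(E(G))| \geq |E(P,G)|+t(|P|-2)+1$ for \emph{every} partition $P$ with $|P|\geq 3$, not merely the extremal one.

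For part (1), I would substitute this lower bound into \eqref{eq:CRPG}, yielding
\begin{equation*}
|c(\CR(P,G))| = |c(E(G))| - |E(P,G)| + \eta(P,G) + \xi(P,G) \geq t(|P|-2) + \eta(P,G) + \xi(P,G) + 1.
\end{equation*}
For part (2), the extra hypothesis $|c(E(G))|>|E(P,G)|$ is equivalent by integrality to $|c(E(G))|\geq |E(P,G)|+1$, and plugging this into \eqref{eq:CRPG} immediately gives $|c(\CR(P,G))| \geq \eta(P,G)+\xi(P,G)+1$.

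There is essentially no obstacle here: the identity \eqref{eq:CRPG} was already derived via a direct inclusion--exclusion on the color sets $c(E(P,G))$ and $c(\CR(P,G))$ (using $c(E(G)) = c(E(P,G)) \cup c(\CR(P,G))$ together with the definition $|c(E(P,G))| = |E(P,G)| - \eta(P,G)$), so the corollary is a one-line consequence in each case. The only point worth flagging is that in part (1) the partition $P$ is arbitrary rather than the argmax of $|E(P,G)|+t(|P|-2)$; this is harmless, since replacing a maximum by one of its terms can only weaken the bound we invoke, which is still enough for the stated conclusion.
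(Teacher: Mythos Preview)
Your proposal is correct and matches the paper's approach exactly: the corollary is stated immediately after \eqref{eq:CRPG} with the one-line justification that $|c(E(G))|\geq \max_{P\colon|P|\geq 3}\{|E(P,G)|+t(|P|-2)\}+1$, and your argument simply spells out the two substitutions into \eqref{eq:CRPG}. There is nothing to add.
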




We also need the following lemma on when a family of $t$ edge-disjoint rainbow spanning forests satisfies the Inequality \eqref{eq:ext} in Theorem \ref{extension}.

\begin{lemma}\label{Fi-large-enough2} Let $\cF = \{F_1,F_2,\ldots,F_t\}$ be a set of $t$ edge-disjoint rainbow spanning forests such that $\sum\limits_{i=1}^t|F_i|\geq t-1+|c(\cup_{i=1}^t F_i)|$, and each color appearing in the $F_i$ appears at least twice. Then for any partition satisfying either hypothesis in Corollary \ref{crossingcolors}, $$|c(\CR(P,G'))|+\sum\limits_{i=1}^t|\CR(P,F_i)|\geq t(|P|-1).$$ Here, $G'$ is the spanning subgraph of $G$ built by removing all edges with colors appearing in $\{F_1, \ldots, F_t\}$.
\end{lemma}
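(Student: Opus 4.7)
The plan is to translate the target inequality into a statement about color multiplicities in $\bigcup_i F_i$ and then combine Corollary \ref{crossingcolors} with a carefully chosen split of $A := c(\CR(P,G)) \cap c(\bigcup_i F_i)$. Set $C := c(\bigcup_i F_i)$, and for each $c \in C$ let $m_c$, $m_c^{\mathrm{cr}}$, $m_c^{\mathrm{nc}}$ denote the numbers of edges of color $c$ in $\bigcup_i F_i$, in $\CR(P,\bigcup_i F_i)$, and in $E(P,\bigcup_i F_i)$ respectively. Because each $F_i$ is rainbow and the $F_i$ are edge-disjoint, $\sum_i |F_i| = \sum_{c \in C} m_c$ and $\sum_i |\CR(P,F_i)| = \sum_{c \in C} m_c^{\mathrm{cr}}$, so the two hypotheses of the lemma amount to $\sum_{c\in C}(m_c-1) \geq t-1$ together with $m_c \geq 2$ for every $c \in C$.

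Next I would observe that since $G'$ is obtained from $G$ by deleting every edge of a color in $C$, $|c(\CR(P,G'))| = |c(\CR(P,G))| - |A|$. Applying the relevant part of Corollary \ref{crossingcolors}, the goal $|c(\CR(P,G'))| + \sum_i |\CR(P,F_i)| \geq t(|P|-1)$ reduces in both cases to the single inequality
\[
\sum_{c\in C} m_c^{\mathrm{cr}} + \eta(P,G) + \xi(P,G) - |A| \geq t-1.
\]
To prove this, I would split $A = A_1 \sqcup A_2$ with $A_2 := \{c \in A : m_c^{\mathrm{nc}} = 0\}$ and $A_1 := A \setminus A_2$. Every $c \in A_1$ has both a non-crossing edge in some $F_i$ and a crossing edge in $G$, hence $c \in c(E(P,G)) \cap c(\CR(P,G))$, giving $|A_1| \leq \xi(P,G)$. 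Every $c \in A_2$ has $m_c^{\mathrm{cr}} = m_c \geq 2$. Since every non-crossing edge of $\bigcup_i F_i$ is also non-crossing in $G$, one also has $\eta(P,G) \geq \sum_{c\,:\, m_c^{\mathrm{nc}} \geq 1}(m_c^{\mathrm{nc}} - 1)$.

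The proof will conclude with a short telescoping: partitioning $C = A_2 \sqcup (C \setminus A_2)$ and combining the contribution $m_c^{\mathrm{cr}} - 1$ on $A_2$ with $m_c^{\mathrm{cr}} + (m_c^{\mathrm{nc}} - 1) = m_c - 1$ on $C \setminus A_2$ yields
\[
\sum_{c\in C} m_c^{\mathrm{cr}} - |A_2| + \eta(P,G) \geq \sum_{c\in C}(m_c - 1) \geq t-1,
\]
and adding the nonnegative slack $\xi(P,G) - |A_1| \geq 0$ finishes the claim. The main obstacle is choosing the right split of $A$: the asymmetric treatment of $A_1$ (absorbed into $\xi$) versus $A_2$ (absorbed into $\sum m_c^{\mathrm{cr}}$ via $m_c^{\mathrm{cr}} = m_c$) is what makes the bookkeeping cash out exactly, and the hypothesis $m_c \geq 2$ is used precisely to ensure $m_c^{\mathrm{cr}} \geq 2$ for colors in $A_2$, supplying the needed $m_c - 1$ contribution on that piece.
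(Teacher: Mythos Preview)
Your proof is correct and follows the same overall strategy as the paper: reduce via Corollary~\ref{crossingcolors} to the single inequality $\sum_{c\in C} m_c^{\mathrm{cr}} + \eta(P,G) + \xi(P,G) - |A| \geq t-1$, and then do a color-accounting argument. The difference lies in how that accounting is organized. The paper bounds $\sum_i |\CR(P,F_i)|$ via \eqref{eq:cl1a}, then invokes the monotonicity $\eta(P,G)\geq \eta(P,\cup_i F_i)$ and $\xi(P,G)\geq \xi(P,\cup_i F_i)$ together with the identity \eqref{eq:CRPG} applied to $\cup_i F_i$ to close the books in one stroke. You instead split $A$ explicitly into $A_1$ (absorbed by $\xi$) and $A_2$ (absorbed into $\sum_c m_c^{\mathrm{cr}}$) and telescope. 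Your route is a bit more hands-on but arguably tidier at the passage from $G$ to $G'$: the paper writes $|c(\CR(P,G'))| = |c(\CR(P,G))| - |c(\CR(P,\cup_i F_i))|$ as an equality, whereas in fact the right-hand side can exceed the left (a color of $C$ may sit on a crossing edge of $G$ outside the $F_i$ while all its $F_i$-edges are non-crossing); the slack is ultimately absorbed by $\xi(P,G)-\xi(P,\cup_i F_i)$, but your $A_1/A_2$ split handles this transparently.

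One minor remark: your closing sentence misidentifies where the hypothesis ``each color appears at least twice'' enters. Your telescoping only uses that $m_c^{\mathrm{cr}}=m_c$ on $A_2$ (immediate from $m_c^{\mathrm{nc}}=0$), that $m_c^{\mathrm{nc}}\geq 1$ on $C\setminus A_2$ (automatic, since for $c\in C\setminus A$ one has $m_c^{\mathrm{cr}}=0$), and the global bound $\sum_{c\in C}(m_c-1)\geq t-1$ from the first hypothesis. None of these steps actually requires $m_c\geq 2$; the paper's argument does not visibly use this hypothesis either.
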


\begin{proof}

Observe that given any partition $P$, 
$$\dss_{i=1}^t |F_i| = \dss_{i=1}^t |\CR(P, F_i)| + \dss_{i=1}^t |E(P, F_i)|.$$
Since $\sum\limits_{i=1}^t|F_i|\geq t-1+|c(\cup_{i=1}^t F_i)|$, it follows that 
\begin{equation}
    \dss_{i=1}^t |\CR(P, F_i)| \geq (t-1) + |c(\displaystyle\cup_{i=1}^t F_i)| - \dss_{i=1}^t |E(P, F_i)|. \label{eq:cl1a}
\end{equation}

Hence it follows that 
\begin{align*}
&\hspace*{-1cm} |c(\CR(P,G'))|+\sum_{i=1}^t |\CR(P,F_i)|- t(|P|-1) \\
 &=  |c(\CR(P,G))|- |c(\CR(P,\cup_{i=1}^t F_i))|+\sum_{i=1}^t|\CR(P,F_i)| - t(|P|-1) \\
 &\geq  \lp t(|P|-2)  + \eta(P,G) + \xi(P,G)+1\rp  - |c(\CR(P,\cup_{i=1}^t F_i))| \hspace*{1cm} \mbox{ by  Corollary \ref{crossingcolors}}  
 \\
   & \qquad+ \lp (t-1) + |c(\displaystyle\cup_{i=1}^t F_i)| -  \dss_{i=1}^t |E(P, F_i)| \rp - t(|P|-1) 
   \hspace*{1cm}
\mbox{ and Inequality \eqref{eq:cl1a}}   \\
 &=  |c(\displaystyle\cup_{i=1}^t F_i)| + \eta(P,G) + \xi(P,G) - \lp |E(P, \cup_{i=1}^t F_i)| + |c(\CR(P,\cup_{i=1}^t F_i))| \rp \\
 &\geq |c(\cup_{i=1}^t F_i)| + \eta(P,\cup_{i=1}^t F_i) + \xi(P,\cup_{i=1}^t F_i) - \lp |E(P, \cup_{i=1}^t F_i)| + |c(\CR(P,\cup_{i=1}^t F_i))| \rp \\
 &=  0.
\end{align*}
The equation in last step is essentially Equation \eqref{eq:CRPG} by replacing $G$ with $\cup_{i=1}^t F_i$.
This completes the proof of the lemma.
\end{proof}
\noindent

\section{Proof of Theorem \ref{main}}\label{Proof-of-Main}

\begin{proof}[Proof of Theorem \ref{main}]
For the sake of contradiction, let $G$ be an edge-minimal counter-example that satisfies the conditions in \eqref{eq:colordisjoint} but does not satisfy
$$r(G,t)\leq \max_{P\colon|P|\geq 3}\{|E(P,G)|+t(|P|-2)\},$$
i.e., there exists a coloring of the edges of $G$ with 
$|c(E(G))|=\max_{P\colon|P|\geq 3}\{|E(P,G)|+t(|P|-2)\}+1$ with no $t$ edge-disjoint rainbow spanning trees. Fix such a coloring of the edges of $G$.
Let $G_2$ be the subgraph of $G$ consisting of all edges whose color appears more than once in $G$. 

\begin{claim}\label{BaseCase}
For every partition $P$ of $G$ with $|P|\geq 3$, we have $|\CR(P,G)|\geq t(|P|-1)+1$.
\end{claim}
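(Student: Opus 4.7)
The plan is to argue by contradiction: suppose some partition $P^*$ of $V(G)$ with $|P^*|\geq 3$ attains $|\CR(P^*,G)|=t(|P^*|-1)$, the minimum forced by the standing hypothesis \eqref{eq:colordisjoint}. Plugging $P^*$ into the coloring assumption $|c(E(G))|=\max_{P\colon|P|\geq 3}\{|E(P,G)|+t(|P|-2)\}+1$ and using $|E(P^*,G)|=|E(G)|-t(|P^*|-1)$ immediately gives $|c(E(G))|\geq |E(G)|-t+1$, so $|c(G_2)|\leq t-1$ and $|E(G_2)|\leq 2(t-1)$; in other words, the fixed coloring is very close to rainbow.

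Next, I exploit the tightness of $P^*$ structurally. Since \eqref{eq:colordisjoint} continues to hold for every coarsening/refinement of $P^*$, Nash--Williams applied to the contracted multigraph $G/P^*$ (which has exactly $t(|P^*|-1)$ edges on $|P^*|$ vertices) shows that $\CR(P^*,G)$ decomposes in $G$ into $t$ edge-disjoint spanning forests $S_1,\ldots,S_t$, each with $|P^*|-1$ crossing edges, one spanning tree of $G/P^*$ apiece. A parallel computation using the partition $R\cup\{V_{j'}\}_{j'\neq j}$, for any partition $R$ of a part $V_j$ of $P^*$, shows $|\CR(R,G[V_j])|\geq t(|R|-1)$, so each induced subgraph $G[V_j]$ itself satisfies Nash--Williams' condition and decomposes into $t$ edge-disjoint spanning trees $W_{1,j},\ldots,W_{t,j}$. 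Combining, $T_i:=S_i\cup\bigcup_j W_{i,j}$ already gives $t$ edge-disjoint (but not yet necessarily rainbow) spanning trees of $G$.

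The last step is to refine the decompositions so that every $T_i$ is rainbow. Since $G_2$ has at most $t-1$ colors and at most $2(t-1)$ edges, any monochromatic pair in some $T_i$ must come from $G_2$ and therefore the ``offending'' set of edges is small. I plan to finish by either (i) a tree-matroid exchange argument that swaps a repeated $G_2$-edge in one $T_i$ against a rainbow-friendly replacement while preserving edge-disjointness and the tight structure at $P^*$, or, equivalently, (ii) an application of Theorem \ref{extension} via Lemma \ref{Fi-large-enough2} to a carefully chosen family of rainbow forests built from the $G_2$-edges, verifying the bookkeeping condition $\sum_i|F_i|\geq t-1+|c(\cup_i F_i)|$ together with the requirement that each color of $\cup_i F_i$ appears at least twice in $G$. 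Either route yields $t$ edge-disjoint rainbow spanning trees in $G$, contradicting the counter-example assumption and proving the claim.

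The main obstacle is the rainbow refinement in the third step: making the exchanges inside the spanning-tree matroid globally compatible with the tightness at $P^*$ and with edge-disjointness. The smallness of $G_2$ and the rigid structure imposed by the tight cut at $P^*$ should leave enough room, but the delicate point is verifying the extension inequality of Theorem \ref{extension} (equivalently, the hypothesis of Lemma \ref{Fi-large-enough2}) for every partition $P'$ of $V(G)$ simultaneously, without breaking any of the already-established decompositions.
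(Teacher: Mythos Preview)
Your first paragraph is exactly the paper's opening: from a tight partition $P^*$ you correctly deduce $|c(E(G))|\geq |E(G)|-t+1$, hence $|c(G_2)|\leq t-1$ and $|E(G_2)|\leq 2(t-1)$. But your second paragraph --- the Nash--Williams construction of $t$ edge-disjoint (non-rainbow) spanning trees $T_i$ --- is a detour that buys you nothing toward the rainbow conclusion, and your third paragraph explicitly leaves the rainbow refinement open. Option (i) is vague, and option (ii), while the right instinct, is presented as an unresolved ``main obstacle.'' That is the gap: you never actually produce the rainbow trees.

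The paper's proof skips the Nash--Williams step entirely and goes straight to your option (ii), which turns out to be short. The key observation you are missing is that your own inequality $|c(E(G))|\geq |E(G)|-t+1$, combined with $|\CR(P',G)|\geq t$ for every bipartition $P'$, gives $|E(P',G)|<|c(E(G))|$; so the second hypothesis of Corollary~\ref{crossingcolors} holds for \emph{all} bipartitions (the first hypothesis already covers $|P|\geq 3$). This means Lemma~\ref{Fi-large-enough2} is available for every partition simultaneously --- the ``delicate point'' you worry about is already handled by that lemma. One then builds the forests $F_i$ directly from $G_2$-edges by a simple greedy distribution: if the top multiplicity $m_1\geq t$, put one edge of that color in each $F_i$ and the bookkeeping $\sum|F_i|=t=(t-1)+|c(\cup F_i)|$ is immediate; if $m_1\leq t-1$, distribute colors cyclically until either all of $G_2$ is placed (then Theorem~\ref{extension} holds trivially since $|c(\CR(P,G'))|+\sum|\CR(P,F_i)|=|\CR(P,G)|$) or every $F_i$ has two edges (then $\sum|F_i|=2t\geq (t-1)+|c(\cup F_i)|$ since at most $t$ colors appear). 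Either way Lemma~\ref{Fi-large-enough2} and Theorem~\ref{extension} finish. Your $T_i$'s are never needed.
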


\begin{proof}
Suppose for contradiction that there exists a partition $P$ of $G$ with $|P|\geq 3$ and $|\CR(P,G)|=t(|P|-1).$ Then we have
\begin{align}
|c(E(G))| & =
\max_{P\colon|P|\geq 3}\{|E(P,G)|+t(|P|-2)\}+1  \nonumber \\
&\geq |E(G)|-t(|P|-1)+t(|P|-2)+1  \nonumber \\
&=|E(G)|-t+1.  \label{eq:minim_num_colors}
\end{align}
Let $P'$ be any bipartition. By Inequality \eqref{eq:colordisjoint}, we have that $|\CR(P',G)|\geq t$. It follows from \eqref{eq:minim_num_colors} that
\begin{eqnarray*}
|E(P',G)|\leq |E(G)|-t<|c(E(G))|.
\end{eqnarray*}
It follows that the second hypothesis of Corollary \ref{crossingcolors} holds for all bipartitions. 

We now show that we can construct $t$ edge-disjoint rainbow spanning forests $F_1, \ldots, F_t$ which satisfy Theorem \ref{extension}. For convenience, let $F_{t+i} \equiv F_i$. We say a color $c$ has \textit{multiplicity} $k$ in $G$ if the number of edges with color $c$ in $G$ is $k$.
Let $m_1\geq m_2\geq\ldots\geq m_s\geq 2$ be the multiplicities of all colors $c_1, c_2,\ldots, c_s$ respectively that have multiplicity at least two in $G$. We have a few cases:

\begin{description}

\item Case 1: $m_1\geq t$. In this case, place one edge of color $c_1$ in each of the forests $F_1, \ldots, F_t$. We have
$$\sum_{i=1}^t |F_i|=t=(t-1)+|c\left(\cup_{i=1}^t F_i\right)|.$$ 
By Lemma \ref{Fi-large-enough2}, $G$ has $t$ edge-disjoint rainbow spanning trees, contradicting the choice of $G$.

\item Case 2:  $m_1\leq t-1$. In this case, we can place one edge of color $c_1$ in each of $\{F_1,F_2,\ldots,F_{m_1}\}$. Then in a similar fashion, place one edge of color $c_2$ in each of $F_{m_1+1}, \ldots, F_{m_1 + m_2}$ (with $F_{t+i}\equiv F_i$). Perform this greedy construction for all colors in $G_2$ (in the order of decreasing multiplicity) until all edges of $G_2$ have been added to $F_1, F_2, \ldots, F_t$ or until each $F_i$ receives two edges. 
If all edges of $G_2$ have been added to $F_1, F_2, \ldots, F_t$, then we have
$$|c(\CR(P,G'))|+\sum_{i=1}^t|\CR(P,F_i)|=|\CR(P,G)|\geq t(|P|-1),$$
where $G'$ is the subgraph of $G$ containing edges with colors of multiplicity $1$.
Hence by Theorem \ref{extension}, $G$ contains $t$ edge-disjoint rainbow spanning trees, contradicting the choice of $G$.
Otherwise, each $F_i$ has exactly two edges. Since there are at most $t$ colors appearing in the $F_i$s, we then have
$$\sum_{i=1}^t|F_i|\geq (t-1)+c\left(\cup_{i=1}^tF_i\right),$$
in which case by Lemma \ref{Fi-large-enough2}, $G$ contains $t$ edge-disjoint rainbow spanning trees, contradicting the choice of $G$ again.
\end{description} 
\end{proof}

\begin{claim}\label{inductivestep}  
There exists an edge $e\in G_2$ whose removal maintains that for every partition $P$, $|\CR(P,G\setminus e)|\geq t(|P|-1).$
\end{claim}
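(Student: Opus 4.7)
The plan is to argue by contradiction: suppose no edge of $G_2$ can be removed while preserving \eqref{eq:colordisjoint}. Then for every $e\in G_2$ there exists a partition $P_e$ with $|\CR(P_e, G\setminus e)|<t(|P_e|-1)$. Since $|\CR(P_e, G\setminus e)|\geq |\CR(P_e, G)|-1$ and $|\CR(P_e, G)|\geq t(|P_e|-1)$ by \eqref{eq:colordisjoint}, this forces $|\CR(P_e, G)|=t(|P_e|-1)$ (so $P_e$ is tight) and $e\in \CR(P_e, G)$. By Claim \ref{BaseCase}, every tight partition is a bipartition with exactly $t$ crossing edges. So every edge of $G_2$ lies in the cut of some tight bipartition.

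I would then establish $|E(G_2)|\geq t+1$. From Claim \ref{BaseCase}, $|E(P,G)|+t(|P|-2)\leq |E(G)|-t-1$ for each $P$ with $|P|\geq 3$, so $|c(E(G))|\leq |E(G)|-t$. Writing $s$ for the number of colors of multiplicity at least $2$, the identity $|E(G_2)|-s=|E(G)|-|c(E(G))|$ together with $s\geq 1$ gives $|E(G_2)|\geq t+1$.

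The key structural fact is that cuts of distinct tight bipartitions are disjoint. Given distinct tight bipartitions $P_1=\{A_1, B_1\}$ and $P_2=\{A_2, B_2\}$, let $Q$ be their common refinement, whose parts are the nonempty sets among $A_1\cap A_2$, $A_1\cap B_2$, $B_1\cap A_2$, $B_1\cap B_2$. Then $|Q|\geq 3$, an edge is non-crossing for $Q$ iff it is non-crossing for both $P_1$ and $P_2$, so $\CR(Q,G)=\CR(P_1,G)\cup\CR(P_2,G)$ and hence $|\CR(Q,G)|=2t-|\CR(P_1,G)\cap\CR(P_2,G)|$. Applying \eqref{eq:colordisjoint} to $Q$ gives $|\CR(Q,G)|\geq t(|Q|-1)\geq 2t$, which simultaneously rules out $|Q|=4$ and forces $|Q|=3$ together with $\CR(P_1,G)\cap\CR(P_2,G)=\emptyset$.

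The main obstacle is now completing the contradiction. Writing $T$ for the union of tight cuts, $T$ is a disjoint union of cuts of size $t$, and by assumption $G_2\subseteq T$. Mimicking the proof of Claim \ref{BaseCase}, I plan to greedily distribute the edges of $G_2$ into $t$ edge-disjoint rainbow forests $F_1,\ldots,F_t$ (one edge per color per forest), so that $\sum_i|F_i|\geq (t-1)+|c(\cup_i F_i)|$ and Lemma \ref{Fi-large-enough2} supplies \eqref{eq:ext} at every partition to which Corollary \ref{crossingcolors} applies. The subtle point is that Corollary \ref{crossingcolors}(2) fails precisely at tight bipartitions, since there $|E(P,G)|=|E(G)|-t\geq |c(E(G))|$. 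Here the disjointness of tight cuts is essential: for each tight bipartition $P$ with cut $X$, the $G_2$-edges of $X$ are placed in distinct forests so that $\sum_i|\CR(P,F_i)|$ together with the surviving colors of $X$ in $G'$ verify \eqref{eq:ext} for $P$ directly. Theorem \ref{extension} then yields $t$ edge-disjoint rainbow spanning trees in $G$, contradicting the choice of $G$.
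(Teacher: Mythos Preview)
Your first three paragraphs essentially reproduce the paper's argument, but you miss one sharpening that finishes the proof immediately. At the common refinement $Q$ of two distinct tight bipartitions you invoke only \eqref{eq:colordisjoint}, obtaining $|\CR(Q,G)|\geq t(|Q|-1)\geq 2t$ and hence merely that distinct tight cuts are disjoint. But Claim~\ref{BaseCase}---which you have already used to force $|P_e|=2$---applies to $Q$ as well, since $|Q|\geq 3$, and gives the \emph{strict} bound $|\CR(Q,G)|\geq t(|Q|-1)+1\geq 2t+1$. Combined with $|\CR(Q,G)|\leq |\CR(P_1,G)|+|\CR(P_2,G)|=2t$, this is an outright contradiction. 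Thus there is at most \emph{one} tight bipartition; since $|E(G_2)|\geq t+1$ and that single tight cut has only $t$ edges, some edge of $G_2$ lies outside it, and you are done. This is exactly the paper's proof, and it renders your entire final paragraph unnecessary.

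That final paragraph is also the weak point of the proposal on its own terms. You do not explain how the greedy placement of $G_2$-edges into $F_1,\ldots,F_t$ (which, as in the proof of Claim~\ref{BaseCase}, proceeds color by color) can be reconciled with the extra requirement that the $G_2$-edges of each tight cut land in distinct forests. Concretely, in the case $m_1\geq t$ you select only $t$ edges of color $c_1$, yet under your contradiction hypothesis \emph{every} edge of color $c_1$ lies in some tight cut; if $m_1>t$, some $c_1$-edge in a tight cut $X$ is unselected, and since it is neither in $G'$ nor in any $F_i$, the count $|c(\CR(P,G'))|+\sum_i|\CR(P,F_i)|$ falls below $|X|=t$. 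Even when all $c_1$-edges of $X$ are selected, you still need the remaining edges of $X$ to carry pairwise distinct colors, which is not guaranteed. So the plan, as stated, has genuine gaps---all of which disappear once you use Claim~\ref{BaseCase} at the refinement step.
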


\begin{proof}
By Claim \ref{BaseCase} every partition $P$ of $G$ with $|P|\geq 3$ satisfies $|\CR(P,G)|\geq t(|P|-1)+1$. It follows that $\max_{P\colon|P|\geq 3}\{|E(P,G)|+t(|P|-2)\}\leq |E(G)|-t-1$. Hence the number of colors in $G$ is at most $|E(G)|-t$. Let $C_1$ be the number of colors with multiplicity one in $G$. Since the total number of colors on the edges of $G$ is less than the total number of edges, $C_1\leq |E(G)|-t-1$. We then have that
\begin{eqnarray*}
|E(G_2)|&=&|E(G)|-C_1\\
&\geq&|E(G)|-(|E(G)-t-1)\\
&=&t+1.
\end{eqnarray*}
Since every partition $P$ with $|P|\geq 3$ satisfies that $|\CR(P,G)|\geq t(|P|-1)+1$, we can remove any edge and still satisfy Claim \ref{inductivestep} for all partitions $P$ with $|P|\geq 3$. As such, the only way that Claim \ref{inductivestep} breaks upon removing an edge $e'$ is if $e'\in \CR(P,G)$ with $|P|=2$ and $|\CR(P,G)|=t$. We claim there can only be one such bipartition. 

Suppose we have at least two bipartitions $P_1$ and $P_2$ with $|\CR(P_1,G)|=|\CR(P_2,G)|=t$. Then, $|\CR(P_1\cap P_2,G)| \leq |\CR(P_1,G)| + |\CR(P_2,G)|\leq 2t$. Since $|P_1\cap P_2|\geq 3$, we have
\begin{eqnarray*}
|\CR(P_1\cap P_2, G)|\leq t(|P_1\cap P_2|-1),
\end{eqnarray*}
which contradicts Claim \ref{BaseCase}. As such, only one such bipartition can exist. Now since $|E(G_2)|\geq t+1$, there is an $e\in G_2$ which does not cross this bipartition. It is clear that this edge $e$ satisfies Claim \ref{inductivestep}.
\end{proof}

Now, remove such an edge $e\in G_2$ and consider $G\setminus e$. Since $G$ is an edge-minimal counter-example which satisfies the Inequalities \eqref{eq:colordisjoint} but does not satisfy 
$$r(G,t)\leq \max_{P\colon|P|\geq 3}\{|E(P,G)|+t(|P|-2)\},$$ 
and $G\setminus e$ still satisfies the Inequalities \ref{eq:colordisjoint}, it must happen that
\begin{eqnarray*}
r(G\setminus e,t)&\leq&\max_{P\colon|P|\geq 3}\{|E(P,G\setminus e)|+t(|P|-2)\}\\
&\leq &\max_{P\colon|P|\geq 3}\{|E(P,G)|+t(|P|-2)\}.
\end{eqnarray*}
Because $G$ is colored with $\max_{P\colon|P|\geq 3}\{|E(P,G)|+t(|P|-2)\}+1$ colors and the color of $e$ has multiplicity at least 2, $G\setminus e$ is still colored with $\max_{P\colon|P|\geq 3}\{|E(P,G)|+t(|P|-2)\}+1 \geq r(G\setminus e,t)+1$ colors. It follows that $G\setminus e$, and hence $G$, contains $t$ edge-disjoint rainbow spanning trees, contradicting the choice of $G$. As such, no edge-minimal counter example exists and thus for any graph $G$ in which every partition satisfies Inequalities \eqref{eq:colordisjoint}, 
$$r(G,t)\leq \max_{P\colon|P|\geq 3}\{|E(P,G)|+t(|P|-2)\}.$$
Together with Lemma \ref{general-lowerbound}, this completes the proof of the theorem.
\end{proof}

\section{Derivation of $r(K_n,t)$ and $r(K_{n_1, n_2, \ldots, n_r},t)$}\label{Derivation}
Given a graph $G$ and an integer $s$, we define $$f_G(s):=\max\limits_{P\colon|P|=s}\{|E(P,G)|\}.$$
Theorem \ref{main} can be rephrased as follows.
$$r(G,t) = \begin{cases}
            |E(G)| & \textrm{ if } \displaystyle\max_{2\leq s \leq n} \{f_G(s)+t(s-1)\} >|E(G)|, \\
            \displaystyle\max_{3\leq s\leq n} \{f_G(s)+t (s-2)\}&  \textrm { otherwise.}
           \end{cases}$$
When $G$ is a complete $r$-partite graph and $s \in [a,b]$, we will show that both functions $f_G(s)+t(s-1)$ and  $f_G(s)+t (s-2)$
attain their maximum at the boundary of the interval. We have the following observation. The proof is straightforward and will be omitted here.

\begin{proposition}\label{prop:maximum_partition}
Suppose that $G$ is a complete multi-partite graph. Then, among all partitions of size $s$, the maximum
of $|E(P,G)|$ can be achieved by a partition $P_0$ where $P_0$ can be constructed by repeatedly splitting one vertex from the largest part $(s-1)$ times.
\end{proposition}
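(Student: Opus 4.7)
The plan is to prove the proposition by a local exchange argument. Starting from any partition $P$ of $V(G)$ with $|P|=s$, I would iteratively transform $P$ into a partition consisting of a single non-singleton part of size $n-s+1$ together with $s-1$ singletons, without ever decreasing $|E(P,G)|$. Such a target partition is precisely what the greedy construction in the proposition produces, since it can be reached from the trivial partition $\{V\}$ by $s-1$ times splitting off one vertex from the unique largest part.

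The basic step is a merge--split operation. Let $V_1,\ldots,V_r$ denote the vertex parts of the complete multi-partite graph $G$. Suppose $P$ has two non-singleton parts $A$ and $B$; I set $C=A\cup B$ and choose $v^*\in C$ in a part $V_{k^*}$ that attains $\max_k|C\cap V_k|$. Replacing $A,B$ by $C\setminus\{v^*\}$ and $\{v^*\}$ gives a new partition $P'$ of the same size $s$ but with one fewer non-singleton part. A direct count, using $|E(G[C])|=|E(G[A])|+|E(G[B])|+|E_G(A,B)|$ and $\deg_{G[C]}(v^*)=|C|-|C\cap V_{k^*}|$, yields
\begin{equation*}
|E(P',G)|-|E(P,G)| \;=\; |E_G(A,B)|-\bigl(|C|-\max_k|C\cap V_k|\bigr),
\end{equation*}
where $|E_G(A,B)|$ is the number of edges of $G$ between $A$ and $B$.

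The crux is therefore the key inequality
\begin{equation*}
|E_G(A,B)| \;\geq\; |A|+|B|-\max_k\bigl(|A\cap V_k|+|B\cap V_k|\bigr)
\end{equation*}
for any disjoint nonempty $A,B\subseteq V(G)$. Assume without loss of generality that the maximum on the right is attained at $k=1$, and write $a^{(k)}:=|A\cap V_k|$, $b^{(k)}:=|B\cap V_k|$. Expanding
\[
|E_G(A,B)|=\sum_{k\neq l} a^{(k)}b^{(l)} = a^{(1)}(|B|-b^{(1)})+b^{(1)}(|A|-a^{(1)})+\sum_{\substack{k,l\geq 2\\k\neq l}}a^{(k)}b^{(l)},
\]
I would then split into cases. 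When $a^{(1)},b^{(1)}\geq 1$, the first two terms already dominate $|B|-b^{(1)}$ and $|A|-a^{(1)}$ respectively, and the inequality is immediate. When instead $a^{(1)}=0$ (the case $b^{(1)}=0$ is symmetric), the maximality $b^{(1)}\geq a^{(k)}+b^{(k)}$ for each $k\geq 2$, after multiplying by $a^{(k)}$ and summing over $k\geq 2$, yields $b^{(1)}|A|\geq\sum_{k\geq 2}(a^{(k)})^2+\sum_{k\geq 2}a^{(k)}b^{(k)}$; combining with the elementary bound $\sum_{k\geq 2}(a^{(k)})^2\geq|A|$ (valid because the $a^{(k)}$ are non-negative integers) closes the inequality. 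The degenerate case $a^{(1)}=b^{(1)}=0$ cannot occur, since it would force $C\cap V_1=\emptyset$ and contradict the maximality of $V_1$.

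The main obstacle is precisely this case analysis, where one must carefully exploit the maximality of $V_{k^*}$ when some side has no vertex in the largest intersected class. Once the key inequality is in hand, iterating the merge--split operation at most $s-1$ times produces a canonical partition $P_0$ of the asserted form with $|E(P_0,G)|\geq|E(P,G)|$, which completes the proof.
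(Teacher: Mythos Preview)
Your argument is correct. The paper actually omits the proof of this proposition entirely, stating only that it ``is straightforward and will be omitted here,'' so there is no authors' proof to compare against; your merge--split exchange argument is a perfectly valid way to fill this gap. The computation of $|E(P',G)|-|E(P,G)|$ is right, and your key inequality $|E_G(A,B)|\geq |A|+|B|-\max_k|C\cap V_k|$ is verified correctly in both cases (in the case $a^{(1)}=0$, the chain $|A|(|B|-b^{(1)})+\sum_{k\ge 2}(a^{(k)})^2\geq |A|+(|B|-b^{(1)})$ follows since $|A|\geq 2$ and $\sum_{k\ge 2}(a^{(k)})^2\geq\sum_{k\ge 2}a^{(k)}=|A|$). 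One cosmetic remark: your justification for excluding the degenerate case $a^{(1)}=b^{(1)}=0$ is phrased slightly loosely --- the point is that if the argmax value $a^{(1)}+b^{(1)}$ equals $0$ then every $a^{(k)}+b^{(k)}=0$ and hence $C=\emptyset$, contradicting $A,B\neq\emptyset$; but this is a wording issue, not a mathematical one. Since each merge--split replaces two non-singleton parts by one non-singleton part and one singleton, the iteration terminates in the claimed canonical form, and applying it to a maximizer $P$ yields a canonical maximizer $P_0$, which is exactly the assertion.
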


In the following lemma, we show that $f_G(s)$ is concave upward in $s$.
\begin{lemma}\label{lem:concave-up}
 Suppose $G$ is a complete $r$-partite graph. Then $f_G(s)$ is concave upward in $s$.
\end{lemma}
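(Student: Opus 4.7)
The plan is to reduce the concavity claim to a simple combinatorial statement about a greedy refinement. By Proposition~\ref{prop:maximum_partition}, for each $s$ the maximum of $|E(P,G)|$ over partitions of size $s$ is achieved by a partition consisting of a single ``big'' part $B_s$ of size $n-s+1$ together with $s-1$ singletons, where $B_{s+1}$ is obtained from $B_s$ by removing a vertex lying in a color class of $G$ whose intersection with $B_s$ is currently largest. These big parts form a nested chain
\[
V(G)=B_1\supset B_2\supset\cdots\supset B_n,
\]
so $f_G(s)=|E(G[B_s])|$ for every $s$.

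Let $C_1,\ldots,C_r$ be the color classes of $G$; define $a_i(s):=|B_s\cap C_i|$ and $M(s):=\max_i a_i(s)$, and write $d(s):=f_G(s)-f_G(s+1)$. The quantity $d(s)$ counts the edges of $G$ from the removed vertex $v_s\in B_s\setminus B_{s+1}$ to the remaining set $B_{s+1}$. Since $v_s$ lies in a color class with $|B_s\cap C_{i^*(s)}|=M(s)$ and $G$ is complete multi-partite, exactly $|B_{s+1}|-(M(s)-1)$ of these potential edges are present, giving
\[
d(s)=(n-s+1)-M(s).
\]
Observing that $f_G$ being concave upward is equivalent to $d$ being non-increasing in $s$, the lemma reduces to showing $M(s)-M(s+1)\le 1$.

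This last inequality is immediate from the greedy step: passing from $B_s$ to $B_{s+1}$ decreases $a_{i^*(s)}$ by one and leaves all other $a_j$ unchanged, so
\[
M(s+1)=\max\!\Bigl(M(s)-1,\ \max_{j\neq i^*(s)} a_j(s)\Bigr)\ge M(s)-1,
\]
while $M(s+1)\le M(s)$ is trivial. Hence $M(s)-M(s+1)\in\{0,1\}$, which yields $d(s+1)\le d(s)$ as required. The only subtle point I anticipate is the consistency claim underlying the first paragraph --- that Proposition~\ref{prop:maximum_partition} allows a single nested chain $\{B_s\}$ to simultaneously realize the maximum at every $s$. This follows directly from the constructive greedy description given by the proposition, and the inequalities above hold regardless of how ties among the $a_i(s)$ are broken, so no further case analysis is needed.
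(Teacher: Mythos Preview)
Your proof is correct and follows essentially the same approach as the paper: both use Proposition~\ref{prop:maximum_partition} to realize $f_G(s)$ via a nested greedy chain and then verify nonnegativity of the second difference by tracking what happens to the largest color-class size when a vertex is split off. The paper writes the second difference directly as $\delta_{n_1'n_2'}$, whereas you encode the same computation via $d(s)=(n-s+1)-M(s)$ and the inequality $M(s)-M(s+1)\le 1$; these are the same calculation in different notation.
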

\begin{proof}
Let $G = K_{n_1, n_2, \ldots, n_r}$ where $n_1\geq n_2 \geq \ldots \geq n_r$ and $\sum_{i=1}^r n_i = n$.
By Proposition \ref{prop:maximum_partition}, among all partitions of $V(G)$, $f_G(s)$ is maximized when we start with $G$ and then repetitively split one vertex from the largest part $(s-1)$ times. Suppose $K_{n_1', n_2', \ldots, n_r'}$ is the complete $r$-partite graph induced by the largest part in some partition that achieves $f_G(s)$. We can assume $n_1'\geq n_2'\geq \cdots\geq n_r'$. Observe that
\begin{align*}
   f_G(s)&=|E(K_{n_1',n_2',\ldots, n_r'})|,\\
   f_G(s+1)&=|E(K_{n_1'-1,n_2',\ldots, n_r'})|,\\
   f_G(s+2)&=\begin{cases}
   |E(K_{n_1'-2,n_2',\ldots, n_r'})| & \mbox{ if } n_1'>n_2',\\
   |E(K_{n_1'-1,n_2'-1,\ldots, n_r'})| & \mbox{ if } n_1'=n_2'.
   \end{cases}\\
    &= |E(K_{n_1'-2,n_2',\ldots, n_r'})| +\delta_{n_1'n_2'}.
\end{align*}
  Hence we have that
  \begin{align*}
      f_G(s)-2f_G(s+1)+f_G(s+2)
      = \delta_{n_1'n_2'}\geq 0.
  \end{align*}
Thus,  $f_G(s)+f_G(s+2)\geq 2 f_G(s+1)$ for all $2 \leq s \leq n-2$. It follows that 
$f_G(s)$ is concave upward in $s$.
\end{proof}

\begin{proof}[Proof of Theorem 3]
Let $G = K_{n_1, n_2, \ldots, n_r}$ where $n_1\geq n_2 \geq \ldots \geq n_r$ and $\sum_{i=1}^r n_i = n$. Similar to Lemma \ref{lem:concave-up}, we have that
\begin{align*}
f_G(2)&= |E(K_{n_1-1,n_2,\ldots, n_r})|,\\
    f_G(3)&=|E(K_{n_1-2,n_2,\ldots, n_r})| +\delta_{n_1n_2},\\
    f_G(n)&=0.
\end{align*}
By Lemma \ref{lem:concave-up}, $f_G(s)$ is concave upward in $s$. By the properties of convex functions, the functions $f_G(s)+ t(s-1)$ and $f_G(s)+ t(s-2)$ are also concave upward in $s$. It follows that the maximum values of these functions are achieved at the ends of the interval (when $s$ ranges over an interval domain). Hence
\begin{align*}
 \max_{2\leq s \leq n} \{f_G(s)+t(s-1)\}
 &=\max\{f_G(2)+t, f_G(n)+t(n-1)\}\\
 &=\max \{ |E(K_{n_1-1,n_2,\ldots, n_r})|+t, t(n-1)\}.
\end{align*}
Thus, $\max_{2\leq s \leq n} \{f_G(s)+t(s-1)\}>|E(G)|$ if and only if
$t(n-1)> |E(G)|$ or $\sum_{i=2}^r n_i<t$.
Similarly, 
\begin{align*}
 \max_{3\leq s \leq n} \{f_G(s)+t(s-2)\}
 &=\max\{f_G(3)+t, f_G(n)+t(n-2)\}\\
 &=\max \{ |E(K_{n_1-2,n_2,\ldots, n_r})|+t +\delta_{n_1n_2}, t(n-2)\}.
\end{align*}
This completes the proof of Theorem \ref{Kpq:main}.
\end{proof}

\begin{remark}
Now we re-derive Theorem \ref{anti-Ramsey} as a corollary of Theorem \ref{main}.
Suppose $G=K_n$. Similar to Lemma \ref{lem:concave-up}, it is straightforward to check that $f_G(s)$ is also concave upward in $s$ when $G=K_n$. So are the functions $f_G(s)+ t(s-1)$ and $f_G(s)+ t(s-2)$. Observe that
\begin{align*}
    f_G(2) &= \binom{n-1}{2},\\
    f_G(3) &= \binom{n-2}{2},\\
    f_G(n) &= 0.
\end{align*}
Hence \begin{align*}
 \max_{2\leq s \leq n} \{f_G(s)+t(s-1)\}
 &=\max\{f_G(2)+t, f_G(n)+t(n-1)\}\\
 &=\max \{\binom{n-1}{2}+t, t(n-1)\}.
\end{align*}

Now the condition $t(n-1)>{n\choose 2}$ can be simplified to $n<2t$, and the condition $\binom{n-1}{2}+t > \binom{n}{2}$ can be simplified to $n< t+1$. Thus
$r(K_n,t)={n\choose 2}$ if $n<2t$.
When $n\geq 2t$, we have
\begin{align*}
r(K_n,t) &=
   \max\{t(n-2), \binom{n-2}{2} +t \}\\
&= \begin{cases}
t(n-2) & \mbox{ if } n=2t,\\
{n-2\choose 2} +t  & \mbox{ if } n\geq 2t+1.
\end{cases}\\
&=  \begin{cases}
{n\choose 2}-t & \mbox{ if } n=2t,\\
{n-1\choose 2} & \mbox{ if } n=2t+1,\\
{n-2\choose 2} +t &\mbox{ if } n\geq 2t+2.
\end{cases}
\end{align*}
\end{remark}

\begin{remark}
Now consider the complete bipartite graph $K_{pq}$.
By Theorem \ref{Kpq:main}, we have that for any $p\geq q$,
$$
r(K_{p,q},t)=
\begin{cases}
pq & \mbox{ if } t(p+q-1)>pq  \mbox{ or } t>q,\\
\max\{t(p+q-2), (p-2)q+t+\delta_{pq}\} & \mbox{ otherwise.}
\end{cases}
$$
In particular, when $p\geq q\geq 2t+1$,
we have $$pq-t(p+q-1)=(p-t)(q-t)-t^2+t> 0,$$
and 
\begin{align*}
    (p-2)q+t+\delta_{pq}-t(p+q-2)
    &= (p-2-t)(q-t) -t^2+t + \delta_{pq}\\
    &\geq (q-t-1)^2 -t^2+t\\
    &\geq t.
\end{align*}
Thus, for $p\geq q\geq 2t+1$,
$$r(K_{p,q},t)= (p-2)q+t+\delta_{pq}.$$
This implies and improves Jia, Lu and Zhang's result in \cite{Jia-Lu-Zhang2021}.
\end{remark}

\end{document}